\renewcommand{\bysame}{\makebox[3em]{\hrulefill}\thinspace}
\DeclareMathOperator{\Princl}{Princ}
\DeclareMathOperator{\Hom}{Hom}
\DeclareMathOperator{\Base}{Base}
\DeclareMathOperator{\Top}{Top}
\DeclareMathOperator{\Btm}{Btm}
\DeclareMathOperator{\Lat}{Lat}
\DeclareMathOperator{\Equi}{Equi}
\DeclareMathOperator{\Frame}{Frame}
\DeclareMathOperator{\Ordc}{{\E P}^3}
\newcommand{\zo}{$\set{0,1}$-}
\newcommand{\zs}{$0$-separating\xspace}
\newtheorem{theorem}{Theorem}
\newtheorem{lemma}[theorem]{Lemma}
\theoremstyle{definition}
\newtheorem{problem}{Problem}
\begin{document}
\title[Homomorphisms and principal congruences]{Homomorphisms and principal congruences \\ of bounded lattices}  
\author{G. Gr\"{a}tzer} 
\email[G. Gr\"atzer]{gratzer@me.com}
\address{Department of Mathematics\\
  University of Manitoba\\
  Winnipeg, MB R3T 2N2\\
  Canada}
\urladdr[G. Gr\"atzer]{http://server.maths.umanitoba.ca/homepages/gratzer/}
\date{August 15, 2015}
\subjclass[2010]{Primary: 06B10.}
\keywords{bounded lattice, congruence, principal, order.}

\begin{abstract}
Two years ago, I characterized the order $\Princl L$ of principal congruences 
of a bounded lattice $L$ as a bounded order.

If $K$ and $L$ are bounded lattices and $\gf$ is a 
\zo homomorphism of $K$ into~$L$, then there is a natural 
isotone \zo-map $\gf_{\Hom}$ from  $\Princl K$ into $\Princl L$.

We prove the converse: For bounded orders $P$ and $Q$
and an isotone \zo map $\gy$ of $P$ into $Q$,
we represent $P$ and $Q$ as $\Princl K$ and $\Princl L$ 
for bounded lattices $K$ and $L$
with a \zo homomorphism $\gf$ of $K$ into $L$, 
so that $\gy$ is represented as $\gf_{\Hom}$.
\end{abstract}

\maketitle

\section{Introduction}\label{S:Introduction}

In my paper \cite{gG13a}, I prove the 
Characterization Theorem of the Order of Principal Congruences:

\begin{theorem}\label{T:char}
Let $L$ be a bounded lattice and let $\Princl L$ denote 
the order of principal congruences of $L$. 
The order $\Princl L$ can be characterized as a bounded order. 
\end{theorem}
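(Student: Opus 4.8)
The plan is to prove the characterization in its two halves: first, that every $\Princl L$ is a bounded order, and, conversely, that \emph{every} bounded order occurs as some $\Princl K$. Thus I read ``characterized as a bounded order'' as the assertion that the orders of the form $\Princl L$, for $L$ a bounded lattice, are exactly the bounded orders.

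\textbf{The easy half.} The set $\Princl L$ carries the inclusion order inherited from $\operatorname{Con}L$, and the diagonal congruence $\omega=\operatorname{con}(a,a)$ is its least element. Since $L$ is bounded with bounds $0$ and $1$, for any congruence $\theta$ with $0\mathrel{\theta}1$ and any $x\in L$ we have $x=x\wedge 1\mathrel{\theta}x\wedge 0=0$; hence $\operatorname{con}(0,1)=\iota$, the full congruence, and it is the greatest element of $\operatorname{Con}L$, a fortiori the greatest principal congruence. So $\Princl L$ has both a least and a greatest element and is a bounded order. I expect this half to be routine.

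\textbf{The representation.} Given a bounded order $P$ with least element $0$ and greatest element $1$, I would build a bounded lattice $K$ with $\Princl K\cong P$ out of small lattice ``gadgets'' glued over a common frame that supplies the global bounds. To each non-bound $p\in P$ I attach an \emph{edge gadget} contributing a prime interval $\mathfrak{e}_p$ whose generated congruence is to represent $p$ (with $0\mapsto\omega$ and $1\mapsto\iota$); and to each covering pair $p\prec q$ of $P$ I attach an \emph{ordering gadget} forcing $\operatorname{con}(\mathfrak{e}_p)\le\operatorname{con}(\mathfrak{e}_q)$ by making $\mathfrak{e}_p$ a one-directional congruence-consequence (a suitable one-way projectivity) of $\mathfrak{e}_q$. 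The crux is a coloring lemma: color each prime interval of $K$ by the element of $P$ it represents and show that (i) $\operatorname{con}(\mathfrak{p})\le\operatorname{con}(\mathfrak{q})$ holds exactly when the color of $\mathfrak{p}$ lies below that of $\mathfrak{q}$, and, decisively, (ii) every principal congruence of $K$ equals some $\operatorname{con}(\mathfrak{e}_p)$ (together with $\omega$ and $\iota$). Granting (i) and (ii), the map $\operatorname{con}(\mathfrak{e}_p)\mapsto p$ is the desired isomorphism $\Princl K\cong P$.

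\textbf{The main obstacle.} The hard part will be (ii): ruling out \emph{spurious} principal congruences. Even after the order among the $\operatorname{con}(\mathfrak{e}_p)$ is correct, a join $\operatorname{con}(\mathfrak{e}_p)\vee\operatorname{con}(\mathfrak{e}_q)$ could accidentally be principal, and the ordering gadgets could create unintended collapses as a by-product. Controlling this requires a careful analysis of how a congruence propagates through the gadgets --- which prime intervals it must collapse once it collapses a given one --- showing the propagation stays inside the down-set of the corresponding color, and that any interval $[a,b]$ for which $\operatorname{con}(a,b)$ is principal lies within a single gadget. A secondary difficulty is that $P$ may be infinite: I would assemble the (possibly infinitely many) gadgets as a direct limit over the finite sub-orders of $P$ and transfer the congruence analysis to the limit, checking that compactness of principal congruences keeps (ii) intact.
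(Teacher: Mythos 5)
Your easy half is fine, and your overall plan --- a frame supplying the global bounds, edge gadgets realizing each $p\in P$ as a designated principal congruence, ordering gadgets forcing comparabilities, and a final analysis excluding spurious principal congruences --- is in outline the same as the construction the paper relies on: $\Lat P$ is obtained from $\Frame P$ by inserting the gadget lattice $G(p,q)$ for pairs $p<q$ in $P^-$. But there is a genuine gap in how you force the order. You attach ordering gadgets only to \emph{covering pairs} $p\prec q$ of $P$. For finite $P$ this would suffice, since there the order is the transitive closure of the covering relation; but the theorem concerns arbitrary bounded orders, and a bounded order need not be determined by its covers --- it may have no covering pairs at all (take $P=\mathbb{Q}\cap[0,1]$ or the real unit interval). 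For such a $P$ your construction produces no ordering gadgets whatsoever, so the congruences $\operatorname{con}(\mathfrak{e}_p)$ would form an antichain rather than a copy of $P$. This is exactly why the paper inserts $G(p,q)$ for \emph{every} comparable pair $0_P<p<q<1_P$, not merely for covers. Your direct-limit patch does not repair this: if $F_1\subseteq F_2$ are finite sub-orders, a covering pair of $F_1$ need not remain a covering pair of $F_2$ (a point may be inserted between them), so the gadget lattice built from $F_1$ is not naturally a sublattice of the one built from $F_2$; the system you propose is not directed and the limit is not defined. Once gadgets are indexed by all comparable pairs, the limit is unnecessary anyway --- the construction can be carried out for the whole, possibly infinite, $P$ in one step.

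The second gap is that your point (ii), the exclusion of spurious principal congruences, is precisely where the substance of the theorem lies, and you only name it as an obstacle. In the source construction this is the content of Lemmas 6--9 of \cite{gG13a}, restated in Section~\ref{S:Main} of this paper: the \zo isolating congruences of $\Lat P$ correspond, via $\bga \mapsto \Base\bga$, to the down sets of $P^-$, this correspondence is an order isomorphism (after adding a unit), and it preserves principality in both directions; the principal congruences are then exactly those attached to principal down sets, together with $\omega$ and $\iota$, whence $P\cong\Princl(\Lat P)$. Establishing this depends on the precise internal structure of $G(p,q)$ --- that it creates the one-way congruence-forcing and nothing else --- not merely on the existence of ``some'' one-way projectivity gadget, so without carrying out that analysis the proposal remains a plan rather than a proof.
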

G. Cz\'edli~\cite{gC15} and~\cite{gC15a}   
extended this result to a bounded lattice and a \zo sub\-lattice.
Let $K$ be a \zo sublattice of a bounded lattice $L$.
Then the map
\begin{equation}\label{E:sub}
   \gy_{\Sub} \colon \conK{x, y} \mapsto \conL{x, y}
\end{equation}
is an isotone \zo map of $\Princl K$ into $\Princl L$. 
Observe that the \zo map $\gy_{\Sub}$ is
\emph{\zs}, that is, $\zero_K$ is the only element 
mapped by $\gy_{\Sub}$ to~$\zero_L$.

Now we can state G. Cz\'edli's result.

\begin{theorem}\label{T:Czedli}
Let $P$ and $Q$ be bounded orders and 
let $\gy$ be an isotone \zs \zo map from $P$ into $Q$. 
Then there exist a bounded lattice $L$, 
a \zo sublattice~$K$ of~$L$, so that $P$, $Q$, and $\gy$
are represented by $\Princl K$, $\Princl L$, and $\gy_{\Sub}$ 
up to isomorphism.
\end{theorem}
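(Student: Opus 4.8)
The plan is to realize $P$ by a bounded lattice $K$, and then to enlarge $K$ to a bounded lattice $L$ representing $Q$, with $K$ a \zo sublattice of $L$ and the inclusion inducing exactly $\gy$. First I would apply Theorem~\ref{T:char} to the bounded order $P$ to obtain a bounded lattice $K$ with $\Princl K \cong P$, together with, for each $p \in P$, a prime interval (an ``edge'') $e_p = \langle a_p, b_p \rangle$ of $K$ whose principal congruence $\conK{a_p, b_p}$ is the element of $\Princl K$ corresponding to $p$. The aim is then to embed $K$ as a \zo sublattice into a bounded lattice $L$ with $\Princl L \cong Q$ so that, for every $p \in P$, the congruence generated \emph{in $L$} by the edge $e_p$, namely $\conL{a_p, b_p}$, is the element of $\Princl L \cong Q$ answering to $\gy(p)$. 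Since $\gy_{\Sub} \colon \conK{a_p, b_p} \mapsto \conL{a_p, b_p}$, this makes $\gy_{\Sub}$ agree with $\gy$ on the generators, hence everywhere.

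To produce $L$ from $K$ I would attach congruence-controlling gadgets that do the three jobs dictated by $\gy$. When $\gy$ identifies two elements $p, p'$ of $P$, the edges $e_p$ and $e_{p'}$ must generate the same congruence in $L$; I would insert a ``bridge'' (a suitable small gadget, for instance built from copies of $N_5$) joining $e_p$ to $e_{p'}$, forcing $\conL{a_p, b_p} = \conL{a_{p'}, b_{p'}}$ without creating new collapses inside $K$. To realize the order of $Q$ transported through $\gy$, whenever $\gy(p) \le \gy(p')$ I would add a bridge forcing $\conL{a_p, b_p} \le \conL{a_{p'}, b_{p'}}$; because $\gy$ is isotone these forcings are mutually consistent, so no congruence is forced both above and below another. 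Finally, for the elements of $Q$ outside the image of $\gy$, and to complete the order of $Q$, I would adjoin fresh edges with their own congruence-forcing gadgets, just as in the proof of Theorem~\ref{T:char} applied to $Q$. All gadgets are chosen so that $K$ remains a \zo sublattice of $L$.

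It then remains to verify the two representations and the map. On the $K$-side, the bridges and adjoined edges are placed so as neither to merge nor to split the principal congruences already present in $K$, so $\Princl K \cong P$ persists. On the $L$-side, a congruence computation over the enlarged lattice shows that the principal congruences of $L$ and their order reproduce $Q$, giving $\Princl L \cong Q$. Under these two isomorphisms, $\gy_{\Sub} \colon \conK{a_p, b_p} \mapsto \conL{a_p, b_p}$ is carried to $\gy \colon p \mapsto \gy(p)$, as required.

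The main obstacle is congruence control: attaching a gadget to force one relation among principal congruences must not spawn unintended principal congruences of $L$, nor alter the principal congruence order of the sublattice $K$. Establishing this requires a careful analysis of how congruences spread through each gadget (the prime-interval projectivity bookkeeping underlying Theorem~\ref{T:char}). This is precisely where the \zs hypothesis is indispensable: a \zo sublattice inclusion can never collapse a nontrivial edge of $K$ to a point, so $\conL{a_p, b_p} = \zero_L$ is possible only when $\conK{a_p, b_p} = \zero_K$. Hence a representation of the required form can exist only if $\gy$ maps the least element of $P$, and nothing else, to the least element of $Q$, and the construction above succeeds exactly under this hypothesis.
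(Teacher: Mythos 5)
Your plan---build $K$ for $P$ via Theorem~\ref{T:char}, extend it to a bounded lattice $L$ carrying generator edges for $Q$, and force the equalities of principal congruences dictated by $\gy$ with small bridge sublattices inserted in both directions, the \zs hypothesis being exactly what permits this since a \zo sublattice inclusion cannot collapse a nontrivial edge---is essentially the same route the paper sketches in Appendix~A for Cz\'edli's proof, where the bridges are the lattices $G(p,q)^\tup{Ext}$ (inserting both $G(p,\gy(p))^\tup{Ext}$ and $G(\gy(p),p)^\tup{Ext}$ yields the two opposite inequalities) added to $\Frame R$ with $R = P \uu Q$. Like the paper's outline, you defer the real work---verifying that the gadgets force exactly the intended relations among principal congruences and no others---which is carried out only in Cz\'edli's papers \cite{gC15} and \cite{gC15a}.
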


Theorem~\ref{T:char} follows from Theorem~\ref{T:Czedli} with
$P = Q$ and $\gy$ the identity map.
     
In this note we take up the analogous problem 
with homomorphic images rather than sublattices.
We start with the following observation.

\begin{lemma}\label{L:ontomap}
Let $K$ and $L$ be bounded lattices and let $\gf$ be a 
\zo homomorphism of $K$ into~$L$. 
Define 
\begin{equation}\label{E:princmap}
   \gy_{\Hom} \colon \conK{a,b} \mapsto \conL{\gf(a),\gf(b)}
\end{equation}
for $a \leq b \in K$. 
Then $\gf_{\Hom}$ is an isotone \zo map of $\Princl K$ into $\Princl L$.
\end{lemma}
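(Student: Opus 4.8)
The plan is to deduce well-definedness, isotonicity, and the preservation of $\zero$ and of the unit all from a single inclusion statement, which I would prove first: for $a \le b$ and $c \le d$ in $K$,
\[
  \conK{a,b} \subseteq \conK{c,d} \quad\Longrightarrow\quad \conL{\gf(a),\gf(b)} \subseteq \conL{\gf(c),\gf(d)}.
\]
To establish this, I would invoke the standard fact that the inverse image of a congruence under a homomorphism is again a congruence: for any congruence $\beta$ of $L$, the relation $\gf^{-1}(\beta) = \{(x,y) \in K^2 : (\gf(x),\gf(y)) \in \beta\}$ is a congruence of $K$, since $\gf$ preserves both lattice operations. Taking $\beta = \conL{\gf(c),\gf(d)}$, the congruence $\gf^{-1}(\beta)$ collapses $c$ and $d$, because $(\gf(c),\gf(d)) \in \beta$; hence $\conK{c,d} \subseteq \gf^{-1}(\beta)$ by the minimality of $\conK{c,d}$. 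Together with the hypothesis $\conK{a,b} \subseteq \conK{c,d}$, this gives $(a,b) \in \gf^{-1}(\beta)$, that is, $(\gf(a),\gf(b)) \in \beta$, which is exactly $\conL{\gf(a),\gf(b)} \subseteq \conL{\gf(c),\gf(d)}$.

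With the claim in hand, the remaining steps are routine. The values lie in $\Princl L$, since $a \le b$ forces $\gf(a) \le \gf(b)$, so $\conL{\gf(a),\gf(b)}$ is a principal congruence of $L$. Isotonicity is the claim itself. For the \zo condition I would use that $\gf$ preserves $\zero$ and the unit: the zero of $\Princl K$ is $\conK{\zero_K,\zero_K}$, which is sent to $\conL{\gf(\zero_K),\gf(\zero_K)} = \conL{\zero_L,\zero_L}$, the zero of $\Princl L$; and the unit of $\Princl K$ is the full congruence $\conK{\zero_K,1_K}$, which is sent to $\conL{\zero_L,1_L}$, the unit of $\Princl L$.

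The one point that genuinely requires the claim is well-definedness, which I regard as the main obstacle: $\gf_{\Hom}$ is prescribed by a rule on generating pairs, and a given principal congruence of $K$ is in general generated by many different pairs. Applying the claim in both directions shows that $\conK{a,b} = \conK{c,d}$ forces $\conL{\gf(a),\gf(b)} = \conL{\gf(c),\gf(d)}$, so the rule descends to a genuine map on $\Princl K$. Everything else then follows at once.
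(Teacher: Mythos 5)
Your proof is correct, but it reaches the crucial inclusion by a genuinely different route than the paper. The paper derives what you call the claim (its implication \eqref{E:isotone}) from the internal, term-based description of principal congruences: Lemma~\ref{L:cp} characterizes congruence-projectivity of intervals by lattice terms $t(x, y_0, \dots, y_{m-1})$, Lemma~\ref{L:cs} describes membership in $\con{a,b}$ by ascending sequences of congruence-projective intervals, and Lemmas~\ref{L:congproj} and~\ref{L:congspred} observe that a homomorphism preserves these witnesses because it preserves terms. You obtain the same inclusion abstractly: for any congruence $\bga$ of $L$, the relation $\gf^{-1}(\bga)$ is a congruence of $K$ (it is the kernel of the composite homomorphism $K \to L \to L/\bga$), and the minimality of $\conK{c,d}$ among congruences collapsing $c$ and $d$ does the rest. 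From that point on, the two proofs finish identically: well-definedness by applying the inclusion in both directions to $\conK{a,b} = \conK{c,d}$, isotonicity being the inclusion itself, and the \zo property from $\gf$ preserving the bounds (with $\conK{0_K,1_K}$ the unit of $\Princl K$ and the trivial congruence its zero). What each approach buys: yours is shorter, needs no machinery beyond the kernel fact, and works verbatim in any variety of algebras, so it could replace Lemmas~\ref{L:cp}--\ref{L:congspred} entirely for this purpose; the paper's route stays inside the explicit Mal'cev-type apparatus that its later constructions and citations (Lemma 229 and Theorem 230 of \cite{LTF}) are built on. Incidentally, your write-up avoids a slip in the paper's own proof, which invokes Lemma~\ref{L:onto} to assert that $\gf_{\Hom}$ is surjective---a statement that is neither part of the lemma nor available unless $\gf$ itself is onto.
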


Now we state our main result, the Representation Theorem for Order Triples.

\begin{theorem}\label{T:main}
Let $P$ and $Q$ be bounded orders and 
let $\gy$ be an isotone \zo map from $P$ into $Q$. 
Then there exist bounded lattices $K$, $L$, and
a \zo homomorphism $\gf$ of $K$ into~$L$, so that $P$, $Q$, and $\gy$
are represented by $\Princl K$, $\Princl L$, and $\gy_{\Hom}$, 
up to isomorphism.
\end{theorem}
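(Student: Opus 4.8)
The plan is to reduce the problem to the \zs case already settled by Theorem~\ref{T:Czedli}, paying for the reduction with a single quotient construction. Write $N = \gy^{-1}(\zero_Q)$ for the preimage of the bottom; since $\gy$ is isotone, $N$ is a down-set of $P$ containing $\zero_P$. Collapsing $N$ to a single new bottom produces a bounded order $R = (P \setminus N) \cup \{\zero_R\}$, together with two isotone \zo maps
\begin{equation*}
  \gy_1 \colon P \to R, \qquad \gy_2 \colon R \to Q,
\end{equation*}
where $\gy_1$ is the identity off $N$ and sends $N$ to $\zero_R$, while $\gy_2$ agrees with $\gy$ off $\zero_R$. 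By construction $\gy = \gy_2 \circ \gy_1$, and $\gy_2$ is \zs, since an element of $R$ is sent to $\zero_Q$ exactly when it is $\zero_R$. (If $Q$ is trivial the statement is immediate, so assume $Q$ has at least two elements; then $\gy(\one_P) = \one_Q \neq \zero_Q$ guarantees $\one_P \notin N$, so $R$ is genuinely bounded and $\gy_1, \gy_2$ are \zo.)

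Applying Theorem~\ref{T:Czedli} to $\gy_2$ yields a bounded lattice $L$, a \zo sublattice $M$, and isomorphisms $\Princl M \cong R$ and $\Princl L \cong Q$ under which the inclusion $M \hookrightarrow L$ realizes $\gy_2$; note that $(M \hookrightarrow L)_{\Hom}$ is precisely the sublattice map $\gy_{\Sub}$ of \eqref{E:sub}. It therefore suffices to realize the collapsing map $\gy_1$ by a surjection: I must produce a bounded lattice $K$ with $\Princl K \cong P$ and a surjective \zo homomorphism $\psi \colon K \to M$ so that $\psi_{\Hom}$ is $\gy_1$. Then $\gf = (M \hookrightarrow L) \circ \psi \colon K \to L$ is a \zo homomorphism with $\gf_{\Hom} = \gy_2 \circ \gy_1 = \gy$, as wanted.

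To attack the surjective step I would build $K$ directly from $P$ by the construction behind Theorem~\ref{T:char}, so that each $p \in P$ is witnessed by a prime interval whose principal congruence $\gamma_p$ realizes $p$, and then set $\Theta = \bigvee(\gamma_n \mid n \in N)$, taking $\psi$ to be the quotient map onto $K/\Theta$. The point is that for a quotient the principal congruence of $K/\Theta$ generated by $\psi(a)$ and $\psi(b)$ corresponds to $\conK{a, b} \vee \Theta$ inside the interval $[\Theta, 1]$ of $\operatorname{Con} K$, so $\psi_{\Hom}$ is exactly $\gamma \mapsto \gamma \vee \Theta$. Thus I need $\gamma_p \leq \Theta$ for $p \in N$ and $\gamma_p \not\leq \Theta$ for $p \notin N$, and I need $\gamma_p \mapsto \gamma_p \vee \Theta$ to be injective and order-reflecting on $P \setminus N$; both are assertions about how the $\gamma_p$ sit inside $\operatorname{Con} K$, which is exactly what the frame construction is designed to control.

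The main obstacle is this internal verification, where the construction cannot be used as a black box. One must exploit the independence of the generators to show that $\gamma_p \leq \bigvee(\gamma_n \mid n \in N)$ forces $\gamma_p \leq \gamma_n$ for some $n$, hence $p \leq n$ and so $p \in N$ since $N$ is a down-set; and, dually, that joining with $\Theta$ creates no new collapses among the remaining generators, so that $\Princl(K/\Theta) \cong R$ with the inherited order. A secondary technical point is the gluing: the lattice $M = K/\Theta$ manufactured here must coincide with the sublattice $M \hookrightarrow L$ supplied by Theorem~\ref{T:Czedli}, so I would run the two steps in tandem, first fixing $K$ and $\Theta$ and then extending the concrete lattice $K/\Theta$ to $L$ by the sublattice technique underlying Theorem~\ref{T:Czedli}, rather than invoking that theorem off the shelf.
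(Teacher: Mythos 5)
Your proposal is correct and is essentially the paper's own argument: your factorization $\gy = \gy_2\gy_1$ through $R$ is exactly the paper's $\gy = \gb\ga$ through $\Top \E P$ (your $N$ is $\Btm \E P$), your quotient $K/\Theta$ with $\Theta = \bigvee(\gamma_n \mid n \in N)$ is the paper's quotient of $\Lat P$ by the congruence $\bga$ with $\Base\bga = \Btm \E P$, and the \zs factor is likewise disposed of by Czédli's Theorem~\ref{T:Czedli}. The two points you flag as obstacles are precisely what the paper settles without new computation: the ``independence'' verification follows from the quoted Lemmas 6--9 of \cite{gG13a} (the \zo isolating congruences of $\Lat P$ form an order isomorphic to $\Down P^-$ with principality preserved both ways, so joins of the $\gamma_n$ are just unions of down-sets), and the gluing is handled by observing that the quotient lattice is itself of the form $\Lat_X (\Top \E P)$ with $X = \Btm \E P$ a set of universal complements, on top of which Czédli's construction---run with $G(p,q)^{\tup{Ext}}$ in place of $G(p,q)$---applies directly, which is the precise form of your ``run the two steps in tandem.''
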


We will consider \emph{lattice-triples}: $\E L = (K, L, \gf)$, where
$K$ and $L$ are bounded lattices and $\gf$ is 
a $\set{0, 1}$-homomorphism of $K$ into~$L$. 
Similarly, we consider \emph{order-triples} $\E P = (P, Q, \gy)$, where
$P$ and $Q$ are bounded orders and $\gy$ 
is an isotone $\set{0, 1}$-map of $P$ into~$Q$. 
By Lemma~\ref{L:ontomap}, a lattice-triple $\E L$ 
defines an order-triple $\E P$ in the natural way:
$P = \Princl K$, $Q = \Princl L$, and $\gy = \gy_{\tup{Hom}}$;
we shall use the notation $\Ordc(\E L)$ for this order-triple. 
A \emph{representable} order-triple~$\E P$ arises from a lattice-triple 
$\E L$ as $\Ordc(\E L)$.

Now we restate the Representation Theorem.

\begin{theorem}\label{T:repr}
Every order-triple is representable.
\end{theorem}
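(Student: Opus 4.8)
The plan is to deduce the theorem from G.~Cz\'edli's Theorem~\ref{T:Czedli} by splitting $\gy$ into a \zs map, to which Cz\'edli's result applies, and a surjection that only collapses elements to the bottom, which I realize by a quotient homomorphism. The point is that an isotone \zo map fails to be \zs precisely by sending some nonzero element to $\zero_Q$, and this defect can be isolated.

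Set $N = \set{p \in P : \gy(p) = \zero_Q}$. As $\gy$ is isotone and $\zero$-preserving, $N$ is a down-set of $P$ with $\zero_P \in N$; assume $Q$ is nontrivial, so $1_P \notin N$ (the degenerate cases follow at once from Theorem~\ref{T:char}). Let $\bar P$ be the bounded order on $(P \setminus N) \cup \set{0}$ in which $0$ is a new bottom and $P \setminus N$ keeps its order from $P$, let $\rho \colon P \to \bar P$ be the quotient map (the identity on $P \setminus N$, sending $N$ to $0$), and let $\bar\gy \colon \bar P \to Q$ be given by $\bar\gy(0) = \zero_Q$ and $\bar\gy(p) = \gy(p)$ otherwise. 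Then $\rho$ and $\bar\gy$ are isotone \zo maps with $\gy = \bar\gy \circ \rho$, and $\bar\gy$ is now \zs, since no element of $P \setminus N$ is sent to $\zero_Q$ (it may still fail to be injective, which is permitted). Applying Theorem~\ref{T:Czedli} to $\bar\gy$ produces a bounded lattice $L$ and a \zo sublattice $\bar K \le L$ with $\Princl \bar K \cong \bar P$ and $\Princl L \cong Q$, for which the inclusion $\iota \colon \bar K \hookrightarrow L$ induces $\bar\gy$ through \eqref{E:sub}.

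It remains to realize $\rho$ by a surjective \zo homomorphism onto this prescribed $\bar K$: a bounded lattice $K$ with $\Princl K \cong P$ and a surjection $\pi \colon K \to \bar K$ with $\pi_{\Hom} = \rho$. Granting this, $\gf = \iota \circ \pi$ is a \zo homomorphism, and since $(\,\cdot\,)_{\Hom}$ respects composition (Lemma~\ref{L:ontomap}) we get $\gf_{\Hom} = \iota_{\Hom} \circ \pi_{\Hom} = \bar\gy \circ \rho = \gy$, which is exactly what the theorem demands. To construct $K$, I would take $\bar K$ and graft onto it, below the congruence $\Phi = \bigvee(\,\conK{x_n, y_n} : n \in N\,)$, one small congruence-forcing gadget for each $n \in N$, arranged so that the edges indexed by $N$ together with the edges of $\bar K$ indexed by $P \setminus N$ encode the whole order $P$; the surjection $\pi$ is then the canonical map $K \to K/\Phi$, and the plan is to check that $K/\Phi \cong \bar K$. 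Here no element of $P \setminus N$ lies below any element of $N$, so passing to $K/\Phi$ kills exactly the congruences indexed by $N$ and leaves those indexed by $P \setminus N$ intact and distinct, which is what forces $\pi_{\Hom} = \rho$. I expect this last step to be the main obstacle: because $\bar K$ is delivered by Cz\'edli's construction and may not be altered, the gadgets must be grafted onto a fixed lattice while controlling both its order of principal congruences (so that $\Princl K \cong P$, with no spurious principal congruences created) and the effect of collapsing $\Phi$ (so that the quotient returns $\bar K$ precisely). Verifying this grafting is the technical heart of the argument.
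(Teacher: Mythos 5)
Your factorization is exactly the paper's: your $N$, $\bar P$, $\rho$, and $\bar\gy$ are the paper's $\Btm \E P$, $\Top \E P$, $\ga$, and $\gb$ of Section~\ref{S:MainRepresentation}, and both arguments intend to handle the \zs part by Cz\'edli's Theorem~\ref{T:Czedli} and the bottom-collapsing part by a surjective \zo homomorphism. The difference is the order of operations, and that is where your proposal has a genuine gap, not merely an unfinished computation. You apply Theorem~\ref{T:Czedli} \emph{first}, which hands you a fixed lattice $\bar K$, and you must then produce $K$ with $\Princl K \iso P$ together with a surjective \zo homomorphism $\pi$ of $K$ onto that \emph{prescribed} $\bar K$ inducing $\rho$. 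Your plan---graft gadgets for the elements of $N$ onto $\bar K$ and let $\pi$ be the quotient map modulo $\Phi$---runs into the following obstacle: collapsing a congruence of a frame-type lattice does not make the collapsed machinery disappear. The congruence classes of the new frame elements and gadget elements indexed by $N$ survive as genuine extra elements of $K/\Phi$; in the paper's analysis (proof of Lemma~\ref{L:surj}) they survive precisely as \emph{universal complements}. So the natural quotient is not $\bar K$ but $\bar K$ with a set $X$ of extra elements adjoined, and since $\bar K$ was delivered by Theorem~\ref{T:Czedli} and cannot be altered after the fact, $\pi$ does not land where you need it; the composition with the inclusion $\iota \colon \bar K \to L$ is then unavailable. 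You flag this step as ``the technical heart,'' but as you have set it up---with a target lattice you do not control---it is the one step that is likely not provable at all, and it is exactly the trap the paper's proof is designed to avoid.

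The paper escapes by reversing the two steps. It first builds $K = \Lat P$ (with Cz\'edli's extended gadgets $G(p,q)^{\tup{Ext}}$), whose congruence lattice is completely described in \cite{gG13a}; the \zo isolating congruence $\bga$ with $\Base \bga = \Btm \E P$ exists by that description, and Lemma~\ref{L:surj} identifies the quotient $K/\bga$ not as $\Lat (\Top \E P)$ but as $\Lat_X (\Top \E P)$, where $X$ is a set of universal complements. It then observes that adjoining universal complements changes neither the order of principal congruences nor the workability of the frame construction (one can start from $\Frame_X$ just as from $\Frame$), and only \emph{then} applies Theorem~\ref{T:Czedli} to $(R, Q, \gb)$, arranged so that the \zo sublattice occurring there is exactly this quotient $M^{\tup{Ext}}$. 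In short, the paper never has to map onto a lattice it does not control: it makes the quotient come out as an instance of the standard construction and feeds that lattice into Cz\'edli's machinery. To repair your argument you would have to do the same---either prove your grafting lemma with the corrected target $\bar K$-plus-universal-complements and show Cz\'edli's construction tolerates those extra elements, or simply swap the order of the two steps as the paper does.
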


The proof of this theorem relies on the construction in \cite{gG13a} 
to prove Theorem~\ref{T:char}. 
To keep this paper short, we assume familiarity this construction. 
We also assume familiarity the basic concepts and notation of this field,
see any one of my books \cite{LTF}--\cite{LTS1}.

In Section~\ref{S:conghom}, we verify some elementary facts,
including that the map in \eqref{E:princmap} is well-defined. 
Section~\ref{S:Main} describes the main step in the proof of 
the Representation Theorem, proving it in a very special case.
Section~\ref{S:MainRepresentation} combines the result in
Section~\ref{S:Main} with Cz\'edli's Theorem~\ref{T:Czedli}
to verify the Representation Theorem.

We list some open problems in Section~\ref{S:Problems}.
In Appendix~A, we point out that a lattice construction of Cz\'edli's 
can be made smaller.

\section{Principal congruences and homomorphisms}\label{S:conghom}

We start by restating two well-known results, see for instance,
Lemma 229 and Theorem~230 in \cite{LTF}.

\begin{lemma}\label{L:cp}
Let L be a lattice, $a, b, c, d \in L$ with $a \leq b$ and $c \leq d$. Then $[a, b]$ is congruence-projective to $[c, d]$ 
if{}f there is an integer $m$ and there are elements 
$p_0, \ldots, p_{m - 1} \in L$ such that
\begin{align}
  t(a, p_0, \dots, p_{m - 1}) &= c,\label{E:pm}\\
  t(b, p_0, \dots, p_{m - 1}) &= d,\label{E:pm1}
\end{align}
where $t$ is defined by
\[
    t(x, y_0, \ldots , y_{m - 1}) = \cdots (((x \jj y_0) \mm y_1) \jj y_2) \mm
      \cdots.
\]
\end{lemma}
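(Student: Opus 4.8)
The plan is to match the alternating join/meet structure of the term $t$ step by step with a chain of congruence-perspectivities, treating the two implications separately. For the direction ($\Leftarrow$), suppose $p_0,\dots,p_{m-1}$ satisfy \eqref{E:pm} and \eqref{E:pm1}. I would introduce the partial terms $t_0=x$ and $t_{k+1}=t_k\jj p_k$ for even $k$, $t_{k+1}=t_k\mm p_k$ for odd $k$, and set $a_k=t_k(a,p_0,\dots,p_{m-1})$ and $b_k=t_k(b,p_0,\dots,p_{m-1})$. Monotonicity of $\jj$ and $\mm$ gives $a_k\le b_k$, so each $[a_k,b_k]$ is an interval, with $[a_0,b_0]=[a,b]$ and $[a_m,b_m]=[c,d]$. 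At a join step $a_{k+1}=a_k\jj p_k$ and $b_{k+1}=b_k\jj p_k$, so $[a_k,b_k]$ is congruence-perspective up onto $[a_{k+1},b_{k+1}]$; at a meet step it is congruence-perspective down. Composing the steps shows $[a,b]$ is congruence-projective to $[c,d]$.

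For the converse ($\Rightarrow$), I would start from the definition of congruence-projectivity as a finite chain of congruence-perspectivities joining $[a,b]$ to $[c,d]$, together with the elementary description that $[r,s]$ is congruence-perspective up onto $[r',s']$ exactly when $r'=r\jj q$ and $s'=s\jj q$ for some $q\in L$, and congruence-perspective down exactly when $r'=r\mm q$ and $s'=s\mm q$. Reading the chain from $[a,b]$ to $[c,d]$ then records a sequence of joins and meets with fixed elements $q\in L$; applying this sequence to the variable $x$ yields a composite term with value $c$ at $a$ and value $d$ at $b$. It remains to put this term into the alternating normal form of $t$: I would collapse each maximal run of consecutive joins to a single join via $x\jj q\jj q'=x\jj(q\jj q')$ and each run of consecutive meets to a single meet, obtaining a strictly alternating term that still sends $a\mapsto c$ and $b\mapsto d$ (ending in either a join or a meet, both being instances of the schema for $t$).

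Finally, the normal form $t$ must begin with a join, so if the alternating term begins with a meet I would prepend the no-op join $x\mapsto x\jj a$, which fixes both endpoints since $a\le a$ and $a\le b$; the degenerate case $[a,b]=[c,d]$ is covered by $t(x,y_0)=x\jj y_0$ with $y_0=a$. Relabelling the chosen elements as $p_0,\dots,p_{m-1}$ then yields \eqref{E:pm} and \eqref{E:pm1}. I expect the only genuine obstacle to be this normalization --- matching the leading operation and the strict alternation demanded by $t$ while preserving the two endpoint values; the monotonicity bookkeeping and the run-collapsing are routine, and the one substantive ingredient is the single-join/single-meet description of congruence-perspectivity used at the start of the converse.
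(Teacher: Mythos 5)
Your proof is correct. The paper itself does not prove this lemma---it is quoted as a known result (Lemma 229/Theorem 230 of \cite{LTF})---and your argument is essentially the standard proof found there: congruence-perspectivities up and down are exactly the maps $x \mapsto x \jj q$ and $x \mapsto x \mm q$ applied to both endpoints, so a chain of them composes to an alternating term (after collapsing consecutive joins/meets by associativity and prepending a harmless $\jj\, a$ if needed), and conversely the partial evaluations of $t$ produce such a chain.
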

 
\begin{lemma}\label{L:cs}
Let $L$ be a lattice and let $a \leq b$ and $c \leq d$ in~$L$. Then
\begin{equation}\label{E:order}
\cngd c = d (\con{a, b})
\end{equation}
if{f}, for some ascending sequence
\begin{equation}\label{E:ascending sequence}
c = e_0 \leq e_1 \leq \cdots \leq e_n = d,
\end{equation}
the  congruence-projectivities
\begin{equation}\label{E:c-projectivities}
[a, b] \cproj \inv{e_j}{e_{j + 1}}
\end{equation}
hold for all $j = 0, \dots, n - 1$.
\end{lemma}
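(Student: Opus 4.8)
The plan is to show that the relation described by the right-hand side is exactly the principal congruence $\con{a,b}$. Let $\Theta$ be the binary relation on $L$ that relates $c$ and $d$ precisely when the interval $[c \mm d, c \jj d]$ carries an ascending sequence $c \mm d = e_0 \le e_1 \le \cdots \le e_n = c \jj d$ with $[a, b] \cproj [e_j, e_{j + 1}]$ for each $j$. Because $\con{a,b}$ is the smallest congruence collapsing $a$ and $b$, it suffices to prove the two inclusions $\Theta \subseteq \con{a,b}$ and $\con{a,b} \subseteq \Theta$; together they give $\Theta = \con{a,b}$, which is the asserted equivalence (for the stated case $c \le d$ one has $c \mm d = c$ and $c \jj d = d$, so the sequence in the lemma is literally the one in the definition of $\Theta$).

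The inclusion $\Theta \subseteq \con{a,b}$ is the routine direction. If the ascending sequence exists, then each congruence-projectivity $[a, b] \cproj [e_j, e_{j + 1}]$ says exactly that every congruence collapsing $a$ and $b$ also collapses $e_j$ and $e_{j + 1}$; in particular $\con{a,b}$ collapses each consecutive pair, and by transitivity it collapses $c \mm d$ and $c \jj d$. Hence $(c, d) \in \con{a,b}$.

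For the reverse inclusion I would verify that $\Theta$ is a congruence relation that collapses $a$ and $b$; minimality of $\con{a,b}$ then forces $\con{a,b} \subseteq \Theta$. That $\Theta$ is reflexive (trivial one-term sequences), that it collapses $a$ and $b$ (the one-step sequence using $[a,b] \cproj [a,b]$), and that it is symmetric with the normalization to $[c \mm d, c \jj d]$ are all immediate from the definition. Transitivity along a comparable chain $c \le d \le e$ follows by concatenating the two ascending sequences. The one substantial point is the substitution property: from $c \le d$ and $(c, d) \in \Theta$ I must derive $(c \jj s, d \jj s) \in \Theta$ and $(c \mm s, d \mm s) \in \Theta$ for every $s \in L$. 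Here I would invoke Lemma~\ref{L:cp}: each step $[a, b] \cproj [e_j, e_{j + 1}]$ is witnessed by the term $t$ and parameters $p_0, \dots, p_{m - 1}$ with $t(a, p_0, \dots, p_{m - 1}) = e_j$ and $t(b, p_0, \dots, p_{m - 1}) = e_{j + 1}$. Appending one further operation — replacing $t$ by $t(x, p_0, \dots, p_{m - 1}) \jj s$, and padding with a no-op step to restore the alternation of joins and meets when necessary — produces a witnessing term for $[a, b] \cproj [e_j \jj s, e_{j + 1} \jj s]$, and symmetrically for the meet. Since $e_j \le e_{j + 1}$ gives $e_j \jj s \le e_{j + 1} \jj s$, the elements $c \jj s = e_0 \jj s \le \cdots \le e_n \jj s = d \jj s$ form an ascending sequence of the required kind, so $(c \jj s, d \jj s) \in \Theta$; the meet case is identical.

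With these checks in place, the standard criterion — that reflexivity, the $[c \mm d, c \jj d]$ normalization, transitivity on comparable triples, and substitution on comparable pairs characterize congruences on a lattice — certifies that $\Theta$ is a congruence, completing the reverse inclusion. I expect the substitution step to be the only real obstacle: one must manipulate the canonical term $t$ so that the extra join or meet by $s$ does not disturb the prescribed alternation of operations, which is exactly where the explicit form of $t$ from Lemma~\ref{L:cp} is essential; everything else is bookkeeping with ascending sequences.
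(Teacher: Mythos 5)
The paper offers no proof of this lemma at all: it is restated as a well-known result, with a pointer to Lemma 229 and Theorem 230 of \cite{LTF}. Your argument is correct and is essentially the classical proof of that cited result: one checks that the relation defined by the existence of such ascending sequences is a congruence collapsing $a$ and $b$, using the standard criterion for congruences (reflexivity, normalization to $[c \wedge d, c \vee d]$, transitivity along chains, substitution for comparable pairs), the only substantive point being that congruence-projectivity is preserved under the maps $x \mapsto x \vee s$ and $x \mapsto x \wedge s$ --- which your term manipulation handles correctly, and where even the ``no-op padding'' you worry about is unnecessary, since associativity simply absorbs a repeated join or meet into the last parameter.
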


The next three lemmas are easy to prove.

\begin{lemma}\label{L:congproj}
Let $K$ and $L$ be lattices and let $\gf$ be a homomorphism of $K$ into~$L$. 
If~$a \leq b$, $x \leq y$, and $[a, b] \cproj [x, y]$ in~$K$,
then 
\begin{equation}\label{E:homcproj}
   [\gf(a), \gf(b)] \cproj [\gf(x), \gf(y)]
\end{equation}
holds in $L$.
\end{lemma}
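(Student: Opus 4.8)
The plan is to reduce the statement to the algebraic characterization of congruence-projectivity furnished by Lemma~\ref{L:cp} and then to exploit the fact that a lattice homomorphism commutes with every lattice term. First I would invoke Lemma~\ref{L:cp} inside $K$: since $[a, b] \cproj [x, y]$, there is an integer $m$ and there are elements $p_0, \ldots, p_{m - 1} \in K$ with
\begin{align*}
  t(a, p_0, \ldots, p_{m - 1}) &= x,\\
  t(b, p_0, \ldots, p_{m - 1}) &= y,
\end{align*}
where $t$ is the alternating join--meet term of Lemma~\ref{L:cp}.

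Next I would apply $\gf$ to both of these equations. Because $\gf$ preserves $\jj$ and $\mm$, it preserves every lattice term built from them; in particular,
\[
  \gf\bigl(t(a, p_0, \ldots, p_{m - 1})\bigr)
    = t\bigl(\gf(a), \gf(p_0), \ldots, \gf(p_{m - 1})\bigr),
\]
and likewise with $a$ replaced by $b$. Setting $q_j = \gf(p_j) \in L$, the two displayed equations become $t(\gf(a), q_0, \ldots, q_{m - 1}) = \gf(x)$ and $t(\gf(b), q_0, \ldots, q_{m - 1}) = \gf(y)$.

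Finally, I would read Lemma~\ref{L:cp} in the reverse direction, now inside $L$: the same integer $m$ together with the witnesses $q_0, \ldots, q_{m - 1}$ yields exactly \eqref{E:homcproj}. Here one should note that $\gf$, being a homomorphism, is isotone, so $\gf(a) \leq \gf(b)$ and $\gf(x) \leq \gf(y)$; hence $[\gf(a), \gf(b)]$ and $[\gf(x), \gf(y)]$ are genuine intervals and the conclusion is meaningful. I do not expect any real obstacle: the only point requiring care is the routine verification that $\gf$ commutes with the specific term $t$, which follows by a trivial induction on its build-up from $\jj$ and $\mm$. Note in particular that no surjectivity or injectivity of $\gf$ is used, so the argument applies to an arbitrary homomorphism.
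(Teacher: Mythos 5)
Your proof is correct and follows essentially the same route as the paper: invoke Lemma~\ref{L:cp} in $K$ to get term witnesses, push them through $\gf$ (which commutes with the alternating term $t$), and read Lemma~\ref{L:cp} backwards in $L$. In fact, your citation of Lemma~\ref{L:cp} for the converse direction is more accurate than the paper's own proof, which at that final step cites Lemma~\ref{L:cs} --- an apparent typo, since it is the term characterization, not the sequence criterion, that converts the two equations back into a congruence-projectivity.
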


\begin{proof}
By Lemma~\ref{L:cp}, if $[a, b] \cproj [x, y]$, 
then there is an integer $m$ and there are elements 
$p_0, \ldots, p_{m - 1} \in K$ such that
\eqref{E:pm} and \eqref{E:pm1} hold.
Since $\gf$ is a homomorphism, we get that 
\begin{align}
   t(\gf(a), \gf(p_0), \dots, \gf(p_{m - 1})) &= \gf(x),
   \label{E:pm3}\\
    t(\gf(b), \gf(p_0), \dots, \gf(p_{m - 1})) &= \gf(y).
   \label{E:pm4}
\end{align}
Again, by Lemma~\ref{L:cs}, \eqref{E:pm3} and \eqref{E:pm4}
imply that \eqref{E:homcproj} holds.
\end{proof}

\begin{lemma}\label{L:congspred}
Let $K$ and $L$ be lattices and let $\gf$ be a homomorphism of $K$ into~$L$. 
If~$a \leq b$, $x \leq y$, and $\cng x = y (\con{a,b})$ in~$K$,
then 
\begin{equation}\label{E:homcproj1}
   \cng {\gf(x)} = {\gf(y)} (\con{ \gf(a), \gf(b)})
\end{equation}
holds in $L$.
\end{lemma}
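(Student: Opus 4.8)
The plan is to reduce the statement to the two preceding lemmas: I combine the description of principal congruences by ascending chains of congruence-projective intervals (Lemma~\ref{L:cs}) with the fact that a homomorphism preserves congruence-projectivity (Lemma~\ref{L:congproj}).

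First I would apply Lemma~\ref{L:cs} to the hypothesis $\cng x = y (\con{a,b})$ in~$K$. This produces an ascending sequence
\[
   x = e_0 \leq e_1 \leq \cdots \leq e_n = y
\]
in~$K$, together with the congruence-projectivities $[a,b] \cproj [e_j, e_{j+1}]$ for every $j = 0, \dots, n - 1$.

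Next I would transport this entire configuration across~$\gf$. Being a homomorphism, $\gf$ is isotone, so applying it to the chain above yields the ascending sequence
\[
   \gf(x) = \gf(e_0) \leq \gf(e_1) \leq \cdots \leq \gf(e_n) = \gf(y)
\]
in~$L$. Moreover, Lemma~\ref{L:congproj} converts each congruence-projectivity $[a,b] \cproj [e_j, e_{j+1}]$ in~$K$ into a congruence-projectivity $[\gf(a), \gf(b)] \cproj [\gf(e_j), \gf(e_{j+1})]$ in~$L$.

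Finally I would invoke the converse direction of Lemma~\ref{L:cs}, now in~$L$: the ascending sequence running from $\gf(x)$ to $\gf(y)$, witnessed by these congruence-projectivities, gives precisely \eqref{E:homcproj1}. I do not expect a genuine obstacle here; the only point needing a word of care is that $\gf$ need be neither injective nor surjective, so the images $\gf(e_j)$ may coincide and the chain may effectively shorten. This causes no trouble, since Lemma~\ref{L:cs} asks only for a weakly ascending sequence with the correct endpoints, and both endpoints $\gf(x)$ and $\gf(y)$ are automatically delivered by $e_0 = x$ and $e_n = y$.
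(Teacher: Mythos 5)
Your proof is correct and follows exactly the paper's own argument: apply Lemma~\ref{L:cs} in $K$, push the chain and the congruence-projectivities through $\gf$ via Lemma~\ref{L:congproj}, and apply Lemma~\ref{L:cs} again in $L$. The extra remark about images of chain elements possibly coinciding is a harmless refinement that the paper leaves implicit.
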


\begin{proof}
By Lemma~\ref{L:cs}, 
there is a sequence $c = e_0 \leq e_1 \leq \cdots \leq e_n = d$,
such that \eqref{E:c-projectivities} holds.
By Lemma~\ref{L:congproj}, 
\begin{equation}\label{E:homcproj2}
[\gf(a), \gf(b)] \cproj [\gf(e_{j}), \gf(e_{j+1})]
\end{equation}
for all $j = 0, \dots, n - 1$.
By Lemma~\ref{L:cs}, \eqref{E:homcproj1} holds.
\end{proof}

We rewrite Lemma~\ref{L:congspred} as follows.

Let $K$ and $L$ be lattices, let $\gf$ be a homomorphism of $K$ into~$L$,
and let us assume that $a,b,c,d \in K$. 
Then
\begin{equation}\label{E:isotone}
\con{a,b} \geq \con{c,d} \text{ in $K$ implies that } 
\con{\gf(a),\gf(b)} \geq \con{\gf(c),\gf(d)} \text{ in $L$}.
\end{equation}

Let us call the lattice-triple $\E L = (K, L, \gf)$ \emph{surjective},
if $\gf$ maps $K$ \emph{onto} $L$.
Similarly, an order-triple $\E P = (P, Q, \gy)$ is \emph{surjective},
if $\gy$ maps $P$ \emph{onto} $Q$. 
An~order-triple $\E P = (P, Q, \gy)$ has a \emph{surjective representation}
if there is a surjective lattice-triple representing it.

\begin{lemma}\label{L:onto}
If the order-triple $\E P = (P, Q, \gy)$ 
has a surjective representation $\E L = (K, L, \gf)$,
then $\E P$ is surjective.
\end{lemma}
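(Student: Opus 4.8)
The plan is to reduce the claim to a statement about the induced map on principal congruences and then to lift comparable pairs through $\gf$. Unwinding the definition of a surjective representation, we are given that $\gf$ maps $K$ onto $L$, that $P \cong \Princ K$ and $Q \cong \Princ L$, and that under these order isomorphisms $\gy$ corresponds to $\gy_{\Hom}$. Since surjectivity of a map is preserved under composition with bijections, it suffices to prove that $\gy_{\Hom} \colon \Princ K \to \Princ L$ is onto; transporting the conclusion back along the two isomorphisms then yields that $\gy$ is onto, that is, that $\E P$ is surjective.

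So I would fix an arbitrary element of $\Princ L$; by the definition of $\Princ L$ it has the form $\conL{c, d}$ for some $c \leq d$ in $L$. Using the surjectivity of $\gf$, I would pick $a_0, b_0 \in K$ with $\gf(a_0) = c$ and $\gf(b_0) = d$. The only subtlety is that $a_0$ and $b_0$ need not be comparable, so they do not immediately determine an element of $\Princ K$. I would remedy this by passing to $a = a_0 \mm b_0$ and $b = a_0 \jj b_0$, so that $a \leq b$ in $K$ and $\conK{a, b} \in \Princ K$.

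Because $\gf$ is a \zo homomorphism, it preserves meets and joins, and since $c \leq d$ we obtain $\gf(a) = \gf(a_0) \mm \gf(b_0) = c \mm d = c$ and $\gf(b) = \gf(a_0) \jj \gf(b_0) = c \jj d = d$. Hence
\[
\gy_{\Hom}(\conK{a, b}) = \conL{\gf(a), \gf(b)} = \conL{c, d},
\]
which exhibits the required preimage and shows that $\gy_{\Hom}$ is onto, completing the reduction described above.

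I do not expect a genuine obstacle here: the entire content is the device of replacing an arbitrary pair of lifts by their meet and join, which succeeds precisely because $\gf$ is a homomorphism and the target pair $c \leq d$ is already comparable. The one point to check carefully is that every element of $\Princ L$ is indeed a principal congruence $\conL{c, d}$ with $c \leq d$, so that the argument genuinely covers the whole codomain; this is immediate from the definition of $\Princ L$.
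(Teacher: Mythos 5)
Your proof is correct and follows essentially the same route as the paper: lift the generators of a principal congruence of $L$ through the surjective homomorphism $\gf$ and apply the definition of $\gf_{\Hom}$. You are in fact slightly more careful than the paper, whose proof lifts arbitrary $u, v \in L$ without addressing that the lifts need not be comparable; your meet/join replacement $a = a_0 \mm b_0$, $b = a_0 \jj b_0$ handles exactly the point the paper leaves implicit.
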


\begin{proof}
Let $u,v \in L$. Since $\gf$ is surjective, 
there are elements $a, b \in K$ with $\gf(a) = u$ and $\gf(b) = v$.
It follows from \eqref{E:princmap} that 
$\gf_{\Hom}(\con{a,b}) = \con{u,v}$, 
so the map $\gf_{\Hom} = \gy$ is surjective.
\end{proof}

Now we prove Lemma~\ref{L:ontomap}.
Since $K$ and $L$ are bounded lattices and $\gf$ is a 
\zo homo\-morphism of $K$ into~$L$, 
it follows that $\gf_{\tup{Hom}}$ is a \zo map.

By Lemma~\ref{L:onto}, the map $\gf_{\Hom}$ is surjective.
Applying \eqref{E:isotone} twice to $\con{a,b} = \con{c,d}$, 
we conclude that $\con{\gf(a),\gf(b)} = \con{\gf(c),\gf(d)}$ in $L$, 
proving that $\gf_{\Hom}$ is a map.
\eqref{E:isotone} also verifies that $\gf_{\Hom}$ is isotone,
concluding the proof of  Lemma~\ref{L:ontomap}.

\section{The main step}\label{S:Main}

The main step in the proof of the Representation Theorem 
is its verification in a very special case.

We need some notation. 
For an order-triple $\E P = (P, Q, \gy)$, we define 
\[
    \Top \E P = \setm{x \in P}{\gy(x) > 0_Q} \uu \set{0_P}
\]
and let $\Top \gy$ be the restriction of $\gy$ to $\Top \E P$.
We also need the ``bottom'' of $\E P$: 
\[
  \Btm \E P = \setm{x \in P}{\gy(x) = 0_Q}.  
\]
Note that
\begin{align*}
   \Top \E P \uu \Btm \E P &= P,\\
   \Top \E P \ii \Btm \E P &= \set{0_P}.
\end{align*}
Finally, for a bounded order $P$, let $\Lat P$ be the lattice
we construct in \cite{gG13a} as an extension of 
$\Frame P$ (see Figure~\ref{F:F} with $X = \es$)
by inserting the lattice $G(p,q)$, see Figure~\ref{F:G(p,q)},
as a sublattice into $\Frame P$ for all $p, q \in P$ satisfying
$0_P < p < q < 1_P$.

\begin{figure}[t!]
\centerline{\includegraphics[scale=0.7]{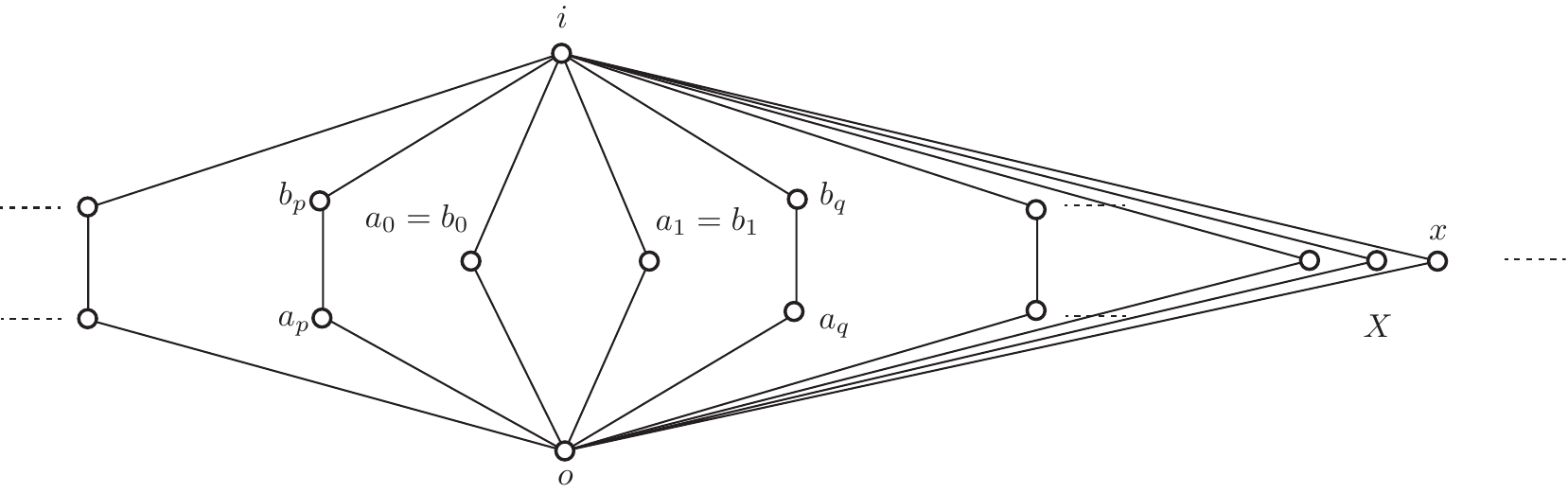}}
\caption{The lattice $\Frame_X P$}\label{F:F}
\end{figure}

\begin{figure}[t!]
\centerline{\includegraphics{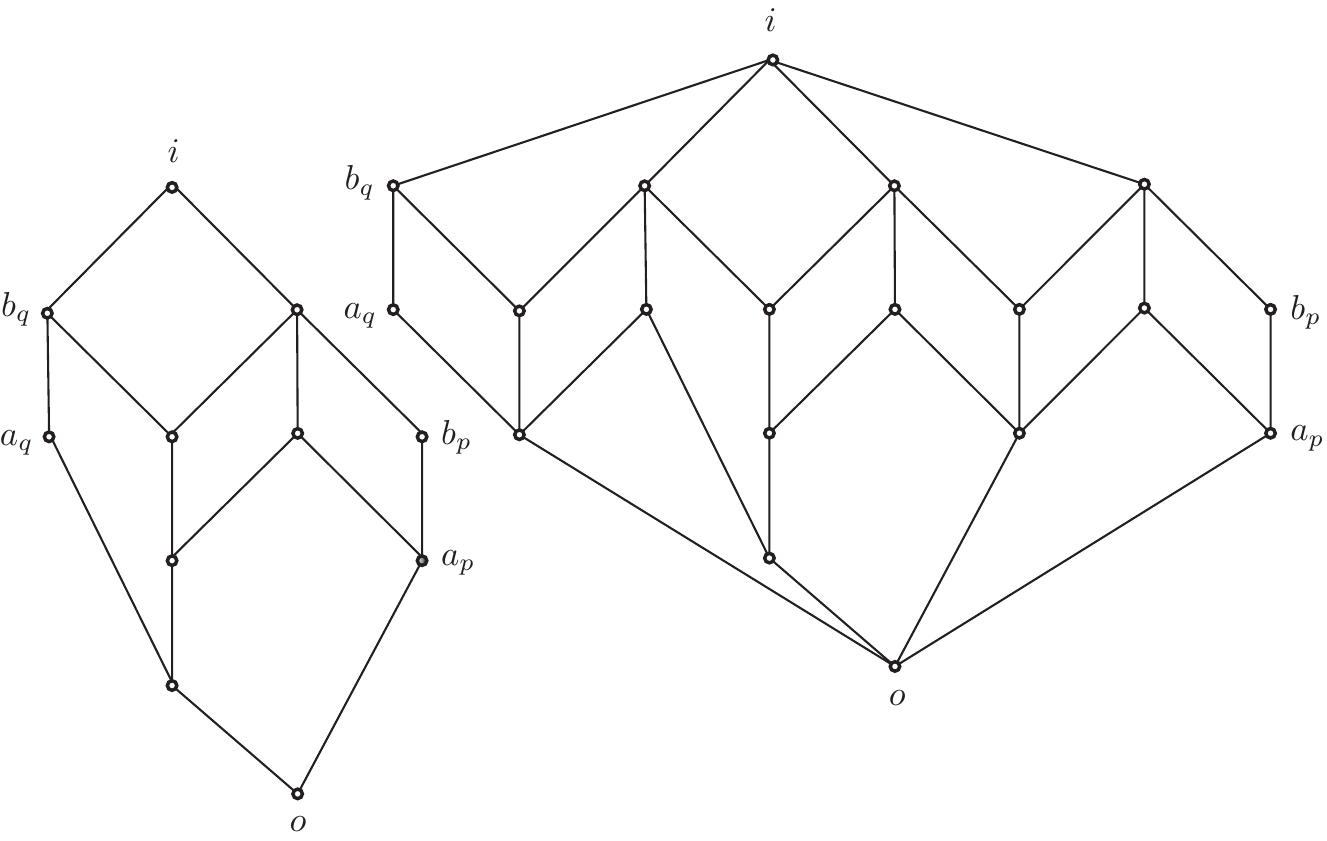}}
\caption{The lattices $G(p,q)$ and $G(p,q)^\tup{Ext}$}\label{F:G(p,q)}
\end{figure}

\begin{lemma}\label{L:surj}
Let $\E P = (P, Q, \gy)$ be a surjective the order-triple.
If $\Top \gy$ is an isomorphism between $\Top \E P$ and~$Q$,
then $\E P$ has a surjective representation
$\E L = (K, L, \gf)$, where $K = \Lat P$, $L = \Lat_X Q$
with $X = \Btm \E P$. 
\end{lemma}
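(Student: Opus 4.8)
The plan is to exploit the special hypotheses to reduce Lemma~\ref{L:surj} to bookkeeping about how the construction $\Lat P$ from \cite{gG13a} behaves under the inclusion $P \subseteq Q$ (identified via the isomorphism $\Top\gy$) together with the marked subset $X = \Btm\E P$. First I would set up the identification: since $\Top\gy$ is an isomorphism of $\Top\E P$ onto $Q$, I may regard $Q$ as the order $\Top\E P$ and then view $P$ as $Q$ with the extra ``bottom'' elements $X = \Btm\E P$ attached (each mapped to $0_Q$ by $\gy$). Under this identification $\Lat P = \Lat_\es P$ and $\Lat_X Q$ are two instances of the same construction applied to the same underlying set of chains $\Frame$, and the plan is to produce $\gf\colon\Lat P \to \Lat_X Q$ as the natural collapsing map that sends every element of $P$ lying in $X$ down to $0_Q$ while acting as the identity on the $\Top$ part. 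Concretely, I would define $\gf$ on the generators (the frame elements $\Frame P$ and the inserted gadgets $G(p,q)$) and check it extends to a lattice homomorphism.

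The key technical steps, in order, are as follows. \emph{(1)} Describe $\gf$ explicitly on $\Frame P$ using the fact that $\Frame_X Q$ is $\Frame Q$ with the elements of $X$ collapsed, so $\gf$ restricted to the frame is the obvious order-preserving, $\set{0,1}$-preserving surjection. \emph{(2)} Extend $\gf$ over each inserted sublattice $G(p,q)$: for a pair $0_P < p < q < 1_P$ in $P$, either both $p,q$ lie in $\Top\E P$ (then the gadget $G(p,q)$ maps isomorphically to the corresponding gadget over $Q$), or at least one of $p,q$ lies in $X$ (then the interval collapses under $\gy$ and the corresponding gadget must be sent into the region of $\Lat_X Q$ where that congruence is already trivial). \emph{(3)} Verify that $\gf$ so defined is a well-defined $\set{0,1}$-lattice homomorphism — this is the routine meet/join compatibility check on the generating diagram. \emph{(4)} Compute $\gf_{\Hom}$ and check it realizes $\gy$: using Lemma~\ref{L:ontomap} and the explicit description of the principal congruences of $\Lat P$ from the construction in \cite{gG13a}, I would show that $\Princ(\Lat P) \iso P$ and $\Princ(\Lat_X Q) \iso Q$ canonically, and that under these isomorphisms the map $\con{a,b}\mapsto\con{\gf(a),\gf(b)}$ becomes exactly $\gy$. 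By Lemma~\ref{L:onto}, surjectivity of $\gy$ already forces whatever representation we build to be surjective, and by construction $\gf$ is onto, so $\E L = (\Lat P, \Lat_X Q, \gf)$ is a surjective representation.

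The main obstacle I expect is step~\emph{(2)}, the behaviour of $\gf$ on the inserted gadgets $G(p,q)$ when the pair straddles the $\Top$/$\Btm$ boundary, i.e.\ when $p\in\Top\E P$ but $q\in X$ or vice versa. In those mixed cases the gadget $G(p,q)$ encodes a nontrivial principal congruence of $K = \Lat P$ that must be killed in $L = \Lat_X Q$ (since $\gy$ sends it to $0_Q$), and one has to check both that $\gf$ can be defined on $G(p,q)$ so as to annihilate exactly that congruence and that this definition remains compatible with the lattice operations where $G(p,q)$ attaches to the frame. Establishing this cleanly requires the detailed structure of $G(p,q)$ and of the collapsing in $\Frame_X Q$ from Figures~\ref{F:F} and~\ref{F:G(p,q)} — the reason the lemma singles out the case where $\Top\gy$ is an \emph{isomorphism} is precisely to keep the $\Top$ part rigid so that all the collapsing is confined to, and controlled by, the marked set $X$. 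Once the mixed gadgets are handled, step~\emph{(4)} should follow from the congruence computations of \cite{gG13a} together with Lemmas~\ref{L:congproj} and~\ref{L:congspred}, which guarantee that $\gf$ transports congruence-projectivities correctly in both directions.
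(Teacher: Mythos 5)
Your identification of the objects is right, and your instinct that all the collapsing is confined to $X = \Btm \E P$ is exactly the point of the lemma; but your plan has a genuine gap at its center, and it is the one you yourself flag: steps \emph{(2)} and \emph{(3)}. You propose to build $\gf \colon \Lat P \to \Lat_X Q$ by prescribing it on the frame and on each gadget $G(p,q)$ and then doing a ``routine meet/join compatibility check on the generating diagram.'' That check is not routine and, as described, is not even a valid extension principle: $\Lat P$ is not freely presented by its frame and gadgets, so a map defined piecewise on those parts does not automatically extend to a lattice homomorphism; verifying compatibility would require control of arbitrary meets and joins in $\Lat P$, which amounts to redoing the structural analysis of \cite{gG13a}. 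In particular, the mixed gadgets $G(p,q)$ with $p \in \Btm \E P$ and $q \in \Top \E P$ (the other mixed case cannot occur: since $\gy$ is isotone, $\Btm \E P$ is a down-set) are exactly the unresolved obstacle in your write-up. There is also a conflation of levels in your description of $\gf$: nothing in $\Lat P$ except $0$ may be sent to $0$, because the kernel of the desired $\gf$ must be a \zo isolating congruence. The map does not send the elements attached to $p \in X$ ``down to $0_Q$''; it only identifies $a_p$ with $b_p$, and their common image survives in $\Lat_X Q$ as a universal complement $x \in X$. Only the induced map on principal congruences sends $\con{a_p, b_p}$ to the zero congruence.

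The idea that closes the gap---and it is how the paper argues---is to never construct $\gf$ by hand. By Lemmas 6--9 of \cite{gG13a}, the correspondence $\bga \mapsto \Base\bga$ is an order-preserving bijection between the \zo isolating congruences of $\Lat P$ and $\Down P^-$, and it preserves principality in both directions. Since $\Btm \E P$ is a down-set of $P^-$, there is a congruence $\bga$ of $K = \Lat P$ with $\Base\bga = \Btm \E P$. Define $L = K/\bga$ and let $\gf$ be the canonical projection: then $\gf$ is a surjective \zo homomorphism with no verification needed, and the principality-preservation statement yields $\Ordc(K, L, \gf) = \E P$ directly. The only structural work remaining is to recognize the quotient: collapsing $[a_p, b_p]$ for exactly the $p \in \Btm \E P$ leaves, for each such $p$, a single element complementary to every other element, so $K/\bga$ is of the form $\Lat_X Q$ with $X = \Btm \E P$. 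In short, your construction should be run in the opposite direction: quotient first, identify the quotient as $\Lat_X Q$ afterwards, rather than building $\Lat_X Q$ first and forcing a homomorphism onto it.
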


\begin{proof}
Let $\E P = (P, Q, \gy)$ be a surjective order-triple
and let $\Top \gy$ be an isomorphism between $\Top \E P$ and $Q$.

A \emph{\zo isolating congruence} $\bga$ of~a bounded lattice $L$, 
is a nontrivial congruence~$\bga$, such that $\set{0}$ and $\set{1}$ are 
congruence blocks of~\bga. 
With a \zo isolating congruence $\bga$ of $\Lat P$,
we associate a subset of the order $P^- = P - \set{0,1}$:
\[
   \Base\bga = \setm{p \in P^-}{\cngd a_p=b_p(\bga)}.
\]

We now restate some parts of Lemmas 6--9 of \cite{gG13a}.

\medskip

\emph{The correspondence 
$
   \gd \colon \bga \to \Base\bga
$
is an order preserving bijection between
the order of \zo isolating congruences of\, $\Lat P$ and $\Down P^-$, 
the order of down sets of~$P^-$.
Let $(\Down P^-)^\tup{t}$ be the order $\Down P^-$ 
with a new unit element, $P$, added.
We extend \gd by $\one \to P$.
Then \gf is an isomorphism between $\Con (\Lat P)$ and $(\Down P^-)^\tup{t}$. 
The maps \gd and $\gd^{-1}$ both preserve the property of being principal.}
\medskip

It follows that $P \iso \Princl (\Lat P)$.

Now we describe a lattice-triple $\E L = (K, L, \gf)$.
We define $K = \Lat P$. 
Since $\Btm \E P$ is a down-set, it follows from the above statement that
there is a congruence~$\bga$ of $K$ with $\Btm \E P = \Base\bga$.
Define the bounded lattice $L = K /\bga$, 
and let $\gf$ be the natural \zo-homomorphism of $K$ onto $L$.
So $\E L = (K, L, \gf)$ is a lattice-triple.
By the definitions of $K$, $L$, and $\gf$,
we have that $\Ordc(\E L) = \E P$, in fact, 
the lattice-triple $\E L$ is a surjective representation 
of the order-triple~$\E P$. 

An element $u$ in a bounded lattice $A$ is a \emph{universal complement}
if $u$ is complementary to every other element of $A -\set{0,1}$.
Now consider the lattice $\Frame_X P$, see Figure~\ref{F:F},
which is the same as $\Frame P$
except that we add the set $X$ such that each $x \in X$
is a universal complement;
$\Frame P$ is the special case $X = \es$.
We then construct the lattice $\Lat_X P$ 
the same way as we constructed $\Lat P$ but starting with $\Frame_X P$;
the elements $x \in X$ remain universal complements in the larger lattice.
Note that the lattice $K$ can be represented in the form $\Lat_X Q$,
where $X = \Btm \E P$, concluding the proof of the lemma. 
\end{proof}

There are a several papers with lattice constructs based on $\Frame P$
(see my paper \cite{gG13a} and G. Cz\'edli's papers
\cite{gC14}--\cite{gC15a}---with more to come).
We have just discussed the construction in \cite{gG13a},
concluding that adding a set $X$ of universal complements to~$\Frame P$
allows the same lattice construction 
and the order of principal congruences 
in the lattice constructed remains the same.

\section{Proving the Representation Theorem}\label{S:MainRepresentation}

Let $\E P = (P, Q, \gy)$ be a an order-triple. 
We consider the bounded order $R = \Top \E P$ and define
the isotone map $\ga \colon P \to R$ as follows:
\begin{equation}\label{E:alpha}
   \ga(x) = 
            \begin{cases}
               x,   &\text{ for }x \in \Top \E P;\\
               0_P=0_R, &\text{ otherwise.}
            \end{cases}
\end{equation}

\begin{figure}[t!]
\centerline{\includegraphics{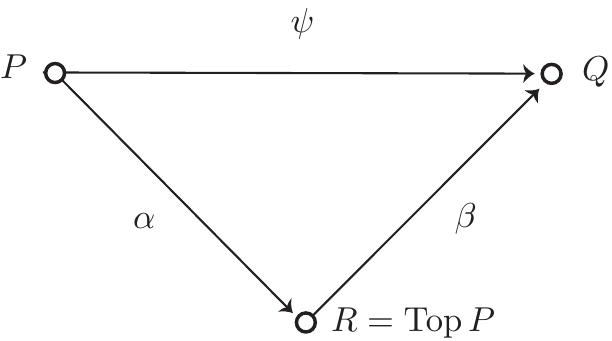}}
\caption{The order-triples $\E P_\ga = (P, R, \ga)$ and $\E P_\gb = (R, Q, \gb)$}\label{F:commdiag}
\end{figure}

We also define the bounded isotone map $\gb \colon R \to Q$ as 
the restriction of $\gy$ to $R = \Top \E P$,
see Figure~\ref{F:commdiag}. Note that $\gb\ga = \gy$.

This defines the order-triples
$\E P_\ga = (P, R, \ga)$ and $\E P_\gb = (R, Q, \gb)$.

Since $\E P_\ga$ is a surjective order-triple
and $\ga$ is an isomorphism between $\Top \E P$ and $R$,
it follows from Lemma~\ref{L:surj} that $\E P_\ga$
has a surjective representation $\E L_\ga = (K, M, \gf_{\bga})$ 
with $K = \Lat P$ and $M = \Lat_X (\Top \E P)$, where $X = \Btm \E P$.

Now we make a small but important technical change. 
In the construction of $\Lat P$ we replace $G(p,q)$ 
with the extended version $G(p,q)^\tup{Ext}$ of G. Cz\'edli~\cite{gC14}, 
obtaining the lattice $\Lat^\tup{Ext} P$, $\Lat P$ extended.
Similarly in $M$, obtaining $M^\tup{Ext}$.

Now we apply Cz\'edli's Theorem~\ref{T:Czedli}.
In $\E P_\gb$, the map $\gb$ is \zs, therefore, we can apply 
Theorem~\ref{T:Czedli} to $R$, $Q$, and~$\gb$. 
So we have obtained
\begin{enumeratea}
\item a lattice-triple, $\E L_\ga = (K, M, \ga)$ 
satisfying  $\Ordc(\E L_\ga) = \E P_\ga$;
\item a lattice-triple $(M, L, \gf_{\Sub})$
satisfying $\Ordc(\E L_\gb) = \E P_\gb$.
\end{enumeratea}

We conclude that $\E L = (K, L, \gb\ga)$ is a  lattice-triple,  
and $\Ordc(\E L) = \E P$, verifying the Representation Theorem.
\begin{figure}[hbt]
\centerline{\includegraphics{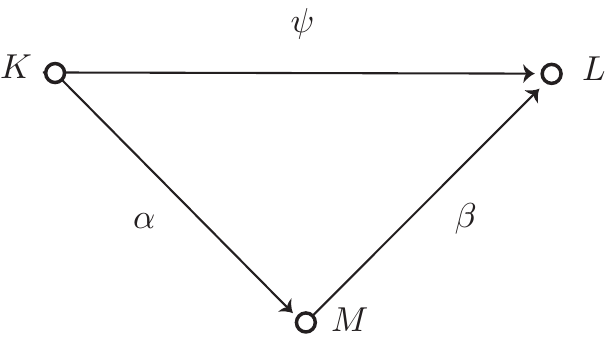}}
\caption{The lattice-triples $(K, M, \ga)$ and $(M, L, \gb)$}\label{F:commdiaglat}
\end{figure}

\section{Problems}\label{S:Problems}

Theorem~\ref{T:char} was extended in G. Cz\'edli~\cite{gC14}
to countable lattices and countable orders with zero.

\begin{problem}\label{P:xx}
Can we extended Theorem~\ref{T:repr} to the countable case?
\end{problem}

This would seem to be technically rather difficult.

\begin{problem}\label{P:xx}
For a finite semimodular lattice $L$, characterize $\Princl L$.
\end{problem}

\begin{problem}\label{P:xx}
For a finite planar semimodular lattice $L$, characterize $\Princl L$.
\end{problem}

\appendix

\section{A smaller construction}\label{S:smaller}

Cz\'edli's proof of Theorem~\ref{T:Czedli} is easy to outline 
(but not so easy to compute).
Let $\E P = (P, Q,\gy)$ be an order triple.
We form the bounded order $R = P \uu Q$, a disjoint union with $0_P, 0_Q$ and $1_P, 1_Q$ identified.
So $R$ is a bounded order containing $P$ and $Q$ as \zo suborders. 
Therefore, we have $\Frame P$ as a \zo suborder of $\Frame R$.

For $p < q$ in $P$ and for $p < q$ in $R$, 
we insert $G(p, q)^\tup{Ext}$ into $\Frame R$
so that $\con{a_p, b_p} < \con{a_q, b_q}$ will hold.
Then we need to ensure that 
\begin{equation}\label{E:wish}
\con{a_p, b_p} = \con{a_{\gy(p)}, b_{\gy(q)}}.
\end{equation}
We accomplish this by inserting $G(p, \gy(p))^\tup{Ext}$ 
and $G(\gy(p),p)^\tup{Ext}$. 
The first insertion gives us
\[
   \con{a_p, b_p} \leq \con{a_{\gy(p)}, b_{\gy(q)}},
\]
while the second gives us
\[
   \con{a_p, b_p} \geq \con{a_{\gy(p)}, b_{\gy(q)}}.
\]
The two together yield \eqref{E:wish}.
\begin{figure}[b!]
\centerline{\includegraphics{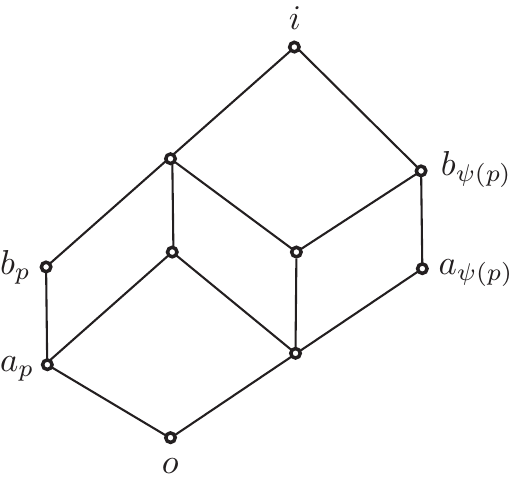}}
\caption{Adding $4$ elements, $\Equi(p)$}\label{F:Equi}
\end{figure}

This step requires that we insert $30$ elements.
Now we reduce this number to~$4$. 
Consider the lattice $\Equi(p)$ of Figure~\ref{F:Equi}.
For all $p \in P$, we add the four new elements of $\Equi(p)$
to the construction to obtain an $8$ element sublattice. 
This forces that $\con{a_p, b_p} = \con{a_{\gy(p)}, b_{\gy(q)}}$.
We do not present the verification of this construction.

By also replacing the sublattices $G(p, q)^\tup{Ext}$ by $G(p, q)$,
if $P^-$ has $n_P$ elements and $c_P$ (nonreflexive) compatibilities
while $Q^-$ has $c_Q$ (nonreflexive) compatibilities,
then Cz\'edli's construction adds $15c_P + 15c_Q + 30n_P$ elements.
The construction in this section adds $7c_P + 7c_Q + 4n_P$ elements.

Note that in Cz\'edli's constructions, in general, 
it is important to use $G(p, q)^\tup{Ext}$ rather than $G(p, q)$.


\begin{thebibliography}{9}

\bibitem{gC14}
G. Cz\'edli, 
\emph{The ordered set of principal congruences of a countable lattice.}
Algebra Universalis.

\bibitem{gC15}
\bysame,
\emph{Representing a monotone map by principal lattice congruences.}
Acta Mathematica Hungarica.

\bibitem{gC15a}
\bysame,
\emph{Representing some families of monotone maps 
by principal lattice congruences.} Algebra Universalis.

\bibitem{CFL}
G. Gr\"atzer,
The Congruences of a Finite Lattice, A \emph{Proof-by-Picture} Approach. 
Birkh\"auser Boston, 2006. xxiii+281 pp. ISBN: 0-8176-3224-7.

\bibitem{LTF}
G. Gr\"atzer,
Lattice Theory: Foundation. 
Birkh\"auser, Basel (2011).

\bibitem{gG13a}
G. Gr\"atzer, 
\emph{The order of principal congruences of a lattice.}
Algebra Universalis \tbf{70} (2013), 95--105. 

\bibitem{LTS1}
G. Gr\"atzer and F. Wehrung eds.,
Lattice Theory: Special Topics and Applications. Volume 1.
Birkh\"auser Verlag, Basel, 2014.\\
ISBN: 978-3-319-06412-3 ISBN: 978-3-319-06413-0 (eBook)

\end{thebibliography}
\end{document}